\newcommand{\R}{\mathbb{R}}
\begin{document}

\title{Decoupling of DeGiorgi-type systems via multi-marginal optimal transport\footnote{BP is pleased to acknowledge the support of a University of Alberta start-up grant. The research of both authors was supported by the Natural Sciences and Engineering Research Council of Canada.  BP would like to thank Almut Burchard for a very useful discussion on rearrangement inequalities.}}\author{Nassif Ghoussoub\footnote{Department of Mathematics, University of British Columbia, Vancouver, BC, Canada, V6T 1Z2 nassif@math.ubc.ca} \qquad and \qquad Brendan Pass\footnote{Department of Mathematical and Statistical Sciences, 632 CAB, University of Alberta, Edmonton, Alberta, Canada, T6G 2G1 pass@ualberta.ca.}}
\maketitle

\begin{abstract}
We exhibit a surprising relationship between elliptic gradient systems of PDEs, multi-marginal Monge-Kantorovich optimal transport problem, and multivariable Hardy-Littlewood inequalities. We show that the notion of an {\it orientable} elliptic system, conjectured  in \cite{FG} to imply that (in low dimensions) solutions with certain monotonicity properties are essentially $1$-dimensional, is equivalent to the definition of a \textit{compatible} cost function, known to imply uniqueness and structural results for optimal measures to certain Monge-Kantorovich problems \cite{P4}.  Orientable nonlinearities and  compatible cost functions  turned out to be also related to submodular functions, which appear in rearrangement inequalities of Hardy-Littlewood type.
We use this equivalence to 
establish a decoupling result for certain solutions to elliptic PDEs and show that under the orientability condition, the decoupling has additional properties, due to the connection to optimal transport.
\end{abstract}

\section{Introduction}
The main purpose of this note is to pinpoint a surprising connection between elliptic systems of PDEs,  
 multi-marginal optimal transportation, and multivariable extended Hardy-Littlewood inequalities.
A recent paper by Fazly and Ghoussoub \cite{FG} introduced the concept of an \textit{orientable} elliptic system (Definition \ref{hmono} below), which seems to be the appropriate framework for investigating De Giorgi type conjectures (\cite{AC}), \cite{GG}, \cite{GG2}) for systems of more than two equations.  On the other hand, the thesis of the second author \cite{P4}, following work of Carlier \cite{C}, introduced the concept of a {\it compatible} cost function (Definition \ref{hcomp} below), a natural, covariant condition ensuring uniqueness and structural results on solutions to a multi-marginal optimal transportation problem with one dimensional marginals. These notions turned out to be also related to submodular (or $2$-monotone) functions, which appear in rearrangement inequalities of Hardy- Littlewood type as studied by several authors dating back to Lorentz \cite{lorentz}.

We will show here that these three conditions are actually equivalent.  As a consequence we shall see how the concept of an $H$-monotone solution to the system \eqref{ellsys} below, which was  introduced in \cite{FG}, is intimately related to the geometric structure of optimal measures in the optimal transport problem \eqref{multmarg} uncovered in \cite{P4}. 
We also show that monotone solutions to the elliptic system can be {\it decoupled}.  If the solution is $H$-monotone, we use the connection with optimal transportation to show that the sum of the decoupled non-linearities is everywhere less than the original non-linearity.

Let $H: \mathbb{R}^m \rightarrow \mathbb{R}$ be a $C^2$ function.  We will consider the system of elliptic PDEs on $\mathbb{R}^N$:
\begin{equation}\label{ellsys}
\Delta u = \nabla H (u),
\end{equation}
where $u=(u_1,u_2,...,u_m): \mathbb{R}^N \rightarrow \mathbb{R}^m$ represents an $m$-tuple of functions.  In this context, $H$ is often referred to as the {\it non-linearity} of the system. 

Given probability measures $\mu_1,\mu_2,...,\mu_m$ on $\mathbb{R}$, the optimal transport (or Monge-Kantorovich) problem consists of minimizing 
\begin{equation}\label{multmarg}
\int_{\mathbb{R}^m} H(p_1,p_2,...,p_m)d\gamma(p_1,p_2,...,p_m)
\end{equation}
among probability measures $\gamma$ on $\mathbb{R}^m$ whose $1$-dimensional marginals are $\mu_i$.  In this setting, $H$ is called the {\it cost function}.

If $H$ is bounded below on $\mathbb{R}^m$, then there exists a solution $\bar\gamma$ to the Kantorovich problem (\ref{multmarg}), as well as an $m$-tuple of functions $(V_1,V_2,...,V_{m})$ --  called Kantorovich potentials -- such that for all $i=1, ..., m,$
\begin{equation}
V_i(p_i)=\inf_{\substack{p_j \in \mathbb{R}\\ j\neq i}}\Big( H(p_1,p_2,...,p_{m})-\sum_{j \neq i}V_j(p_j)\Big),
\end{equation}
and which maximizes the following dual problem
\begin{equation}\label{dual}
\sum^{m}_{i=1}\int_{\mathbb{R}} V_i(p_i)d\mu_i 
\end{equation}
among all $m$-tuples $(V_1, V_2,...,V_{m})$ of functions $V_i \in L^1(\mu_i)$ for which 
\begin{equation}
\hbox{$\sum_{i=1}^{m} V_i(p_i ) \leq H(p_1,...,p_{m})$ for all $(p_1,...,p_{m}) \in  \mathbb{R}^m$}
\end{equation}  
(see, for example, Theorem 4.1.1 in \cite{P4}).
 Furthermore, the maximum value in (\ref{dual}) coincides with the minimum value in (\ref{multmarg}) and    \begin{equation}
\hbox{$  \sum\limits_{i=0}^{m-1}V_i(p_i) =H(p_1,...,p_{m})$ \quad  for all  $(p_1,...,p_{m}) \in {\rm support}(\bar\gamma)$.}
  \end{equation}
 In a certain sense, the dual problem (\ref{dual}) provides a decoupling of the original Monge-Kantorovich problem. In this note, we shall show that the above scheme also provides a decoupling of the system (\ref{ellsys}), at least for certain type of solutions. Roughly speaking, under certain monotonicity conditions on a solution $u=(u_1, u_2,..., u_m)$ of (\ref{ellsys}), the duality in the Monge-Kantorovich problem  applied to a suitable set of marginals $(\mu_{u_1}, \mu_{u_2},..., \mu_{u_2})$ associated to $u$, leads to a decoupled system 
 \begin{equation}
\hbox{$\Delta u_i = \frac{\partial V_i}{\partial p_i}(u_i(x))$\,\, for $i=1,..., m.$}
\end{equation}
having $u=(u_1, u_2,..., u_m)$ as a solution, where $V_1,..., V_m$ are the corresponding Kantorovich potentials. 
Decoupled systems are much simpler than coupled systems and have a number of advantages.  For example, whenever the decoupling above is possible and the potentials $V_i$ are positive, one can deduce the following Modica inequality \cite{M} for systems
\begin{equation}
\hbox{$\sum_{i=1}^m|\nabla u_i(x)|^2 \leq 2H(u_1(x),..., u_m(x))$ for all $x\in \mathbb{R}^N$.}
\end{equation}
 Whether such an inequality holds true for a general gradient system remains an open problem. See Alikakos \cite{Ali}. 

The compatibility condition is also equivalent --up to a change of variables-- to the classical notion of submodularity (also known as $2$-monotonicity) of $H$ (see Definition \ref{submod}).   A result of Carlier on the structure of optimizers in \eqref{multmarg} for a submodular cost function $H$ is essentially equivalent to a rearrangement inequality of Hardy-Littlewood type \cite{C}\cite{lorentz}\cite{BH}.  It follows that the covariant analogue of this result noted  in \cite{P4} implies a rearrangement inequality for compatible $H$,  but where decreasing rearrangements should be replaced by {\it $H$-monotone rearrangements}.  Note that rearrangement inequalities have found applications in nonlinear optics \cite{HS}, where one looks for ground states of energy functionals  of the form 
\begin{equation}\label{energy}
E(u) = \int_{\mathbb{R}^N} \frac{1}{2} \sum_{i=1}^m |\nabla u_i(x)|^2 +H(u) dx.
\end{equation}
Note that  \eqref{ellsys} is nothing but the Euler-Lagrange system corresponding to this energy functional.

For a submodular $H$,  any nonnegative function $u$ may be rearranged in a symmetric and decreasing way (that is, the decreasing rearrangement is with respect to the variable $|x|$)  to decrease the total energy.  In our case, at least on bounded domains, where $H$ is orientable, any $u$ can be rearranged in an $H$-monotone way with respect to the variable $x_N$ to decrease the second component of the energy.  On appropriate domains rearrangement will also decrease the gradient term, so that the total energy also decreases under $H$-monotone rearrangement (see Theorem \ref{bounded_domains} below).

Finally, we note that, while these types of decouplings can only be done for gradient systems, it is not essential to have the Laplacian on the left hand side; similar decouplings are possible when the left hand side is replaced by any decoupled differential operator $D_iu_i$.  

\section{Orientable systems, compatible cost and sub-modular functions}
We begin by recalling the definitions of an orientable system and of a compatible cost.  Our definitions here are actually strict versions of the original definitions in \cite{FG}, where non-strict inequalities are used. Note that in what follows, we do not use an implicit summation convention.
\newtheorem{hmono}{Definition}[section]

\begin{hmono}\label{hmono}
The system (or the non-linearity $H: \R^m \rightarrow \R$) is called orientable on a subset $\Omega$ of $\R^m$, if there exist constants $(\theta_i)_{i=1}^m$ such that, for all $i \neq j$,
\begin{equation*}
\hbox{$\theta_i\theta_j \frac{\partial^2 H}{\partial p_i \partial p_j}({\bf p}) <0$  for all ${\bf p}=(p_1,..., p_m)\in \Omega$.}
\end{equation*}
\end{hmono}

Next, we recall the definition of compatibility, discussed in \cite{P4}.
\newtheorem{hcomp}[hmono]{Definition}
\begin{hcomp}\label{hcomp}
We say $H$ is compatible on $\Omega$ if for all distinct $i,j,k$, we have 

\begin{equation*}
\hbox{$\frac{\partial ^2 H}{\partial p_i \partial p_j} \Big(\frac{\partial ^2 H}{\partial p_k \partial p_j}\Big)^{-1} \frac{\partial ^2 H}{\partial p_k \partial p_i}<0$ for all ${\bf p} =(p_1,p_2,...,p_m) \in \Omega$.}
\end{equation*}

\end{hcomp}

Next, recall the following classical definition.
\newtheorem{submod}[hmono]{Definition}
\begin{submod}\label{submod}
The function $H$ is submodular (or $2$-increasing in Economics)  on $\Omega$ if
\begin{equation*}
\hbox{$H({\bf p}+h{\bf e}_i+k{\bf e}_j)+H({\bf p})- H({\bf p}+h{\bf e}_i)-H({\bf p}+k{\bf e}_j) \leq 0 $ \qquad $(i\neq j, \quad h, k >0),$}
\end{equation*}
where ${\bf p}=(p_1,...,p_m) \in \Omega$ and ${\bf e}_i$ denotes the $i$-th standard basis vector in ${\mathbb{R}^m}$. 
\end{submod}
\newtheorem{equivcond}[hmono]{Lemma}
\begin{equivcond}\label{equivcond} The following are equivalent for a function $H\in C^2({\mathbb R}^m, \mathbb{R})$.
\begin{enumerate}
\item $H$ is orientable on $\Omega$.
 \item $H$ is compatible  on $\Omega$.
 \item After a change of variables, $H$ is submodular  on $\Omega$.
 \end{enumerate}
\end{equivcond}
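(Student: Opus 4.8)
\medskip

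The plan is to differentiate twice and reduce all three conditions to statements about the signs of the off-diagonal second partials $H_{ij}:=\frac{\partial^{2}H}{\partial p_i\,\partial p_j}$ on $\Omega$. Throughout I assume $\Omega$ is connected (otherwise argue on each connected component) and, where submodularity is concerned, convex (only small coordinate rectangles inside $\Omega$ are actually used). When $H_{ij}$ has constant sign I write $\sigma_{ij}=\operatorname{sgn}(H_{ij})\in\{\pm1\}$.

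I would begin with $(1)\Leftrightarrow(3)$. For a $C^{2}$ function the second difference in Definition \ref{submod} equals $\int_{0}^{h}\!\int_{0}^{k}H_{ij}({\bf p}+s{\bf e}_i+t{\bf e}_j)\,dt\,ds$, so (in its strict form) submodularity is equivalent to $H_{ij}<0$ on $\Omega$ for all $i\neq j$. In Definition \ref{hmono} the constants $\theta_i$ are necessarily nonzero, and replacing each by its sign alters no product $\theta_i\theta_j H_{ij}$; hence we may take $\theta_i\in\{\pm1\}$, and then the diagonal change of variables $q_i=\theta_i p_i$ carries $H$ to $\widetilde H$ with $\widetilde H_{ij}=\theta_i\theta_j H_{ij}$. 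Orientability of $H$ with signs $(\theta_i)$ is thus exactly submodularity of $\widetilde H$ (on the image of $\Omega$), which gives $(1)\Leftrightarrow(3)$.

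The implication $(1)\Rightarrow(2)$ is a one-line sign count: orientability makes each $H_{ij}$ ($i\neq j$) non-vanishing with $\operatorname{sgn}(H_{ij})=-\theta_i\theta_j$, so for distinct $i,j,k$ the quantity $H_{ij}(H_{kj})^{-1}H_{ki}$ has sign $(-\theta_i\theta_j)(-\theta_k\theta_j)(-\theta_k\theta_i)=-\theta_i^{2}\theta_j^{2}\theta_k^{2}=-1$, i.e.\ Definition \ref{hcomp} holds. The substantive direction is $(2)\Rightarrow(1)$. First I would note that compatibility forces each $H_{ij}$ ($i\neq j$) to be non-vanishing on $\Omega$: if $H_{ij}({\bf p})=0$, then for any third index $k$ the expression in Definition \ref{hcomp} vanishes at ${\bf p}$ (and this expression tacitly requires $H_{kj}\neq0$ as well), contradicting its strict negativity. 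By connectedness and continuity each $\sigma_{ij}$ is then a well-defined constant in $\{\pm1\}$, and Definition \ref{hcomp} says exactly that $\sigma_{ij}\sigma_{jk}\sigma_{ki}=-1$ for all distinct $i,j,k$. It remains to produce $\theta_i\in\{\pm1\}$ with $\sigma_{ij}=-\theta_i\theta_j$: putting $\tau_{ij}:=-\sigma_{ij}$, the hypothesis becomes $\tau_{ij}\tau_{jk}\tau_{ki}=1$ for all distinct triples, and one shows such a $\{\pm1\}$-assignment on pairs is ``balanced'', i.e.\ of the form $\tau_{ij}=\theta_i\theta_j$, by fixing the index $1$, setting $\theta_1=1$ and $\theta_i=\tau_{1i}$, and using the triple relation on $\{1,i,j\}$ to get $\tau_{ij}=\tau_{1i}\tau_{1j}=\theta_i\theta_j$. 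Then $\operatorname{sgn}(\theta_i\theta_j H_{ij})=\theta_i\theta_j\,\sigma_{ij}=-1$ for all $i\neq j$, so $H$ is orientable.

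I expect the only genuinely non-routine ingredient to be this combinatorial factorization (in essence Harary's balance theorem for a signed complete graph), together with the preliminary observation that compatibility by itself forces each $H_{ij}$ to have constant sign on each connected piece of $\Omega$. One caveat worth flagging in the write-up: when $m=2$ Definition \ref{hcomp} is vacuous whereas Definition \ref{hmono} still requires $H_{12}$ to have constant nonzero sign, so the equivalence $(1)\Leftrightarrow(2)$ is really a statement for $m\geq3$, with the cases $m\leq2$ being trivial.
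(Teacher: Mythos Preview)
Your proof is correct and follows essentially the same approach as the paper: both reduce everything to signs of the off-diagonal partials $H_{ij}$, both construct the orienting constants by setting $\theta_i$ according to the sign of $H_{1i}$, and both realize condition (3) via the sign-flip change $q_i=\pm p_i$. Your write-up is somewhat more careful than the paper's (you make explicit the non-vanishing of $H_{ij}$, the connectedness assumption, the $m=2$ degeneracy, and the link to balanced signed graphs), but the underlying argument is the same; the only point to tighten is that the paper's condition (3) allows general monotone coordinate changes $p_i\mapsto q_i(p_i)$, so for $(3)\Rightarrow(1)$ you should note that such a change multiplies $H_{ij}$ by the sign-constant factor $p_i'(q_i)p_j'(q_j)$, immediately yielding orientability with $\theta_i=\operatorname{sgn}(p_i')$.
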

\begin{proof}
We will first show that 1) and 2) are equivalent. First suppose $H$ is orientable.  Then we have, for all distinct $i,j,k$,
\[
\frac{\partial ^2 H}{\partial p_i \partial p_j} \theta_i\theta_j\Big(\frac{\partial ^2 H}{\partial p_k \partial p_j}\theta_k\theta_j\Big)^{-1} \frac{\partial ^2 H}{\partial p_k \partial p_i}\theta_k\theta_i<0,
\]
and therefore
\[
\theta_i^2\frac{\partial ^2 H}{\partial p_i \partial p_j} \Big(\frac{\partial ^2 H}{\partial p_k \partial p_j}\Big)^{-1} \frac{\partial ^2 H}{\partial p_k \partial p_i}<0,
\]
which implies compatibility.

On the other hand, if $H$ is compatible, set $\theta_i =1$ if $\frac{\partial ^2 H}{\partial p_i \partial p_1} >0$ and $\theta_i =-1$ if $\frac{\partial ^2 H}{\partial p_i \partial p_1} <0$.  Then, up to positive multiplicative constants, we have

\begin{equation*}
\theta_i\theta_j\frac{\partial ^2 H}{\partial p_i \partial p_j} =\frac{\partial ^2 H}{\partial p_i \partial p_1}\frac{\partial ^2 H}{\partial p_1 \partial p_j}\frac{\partial ^2 H}{\partial p_i \partial p_j} <0,
\end{equation*}
establishing $H$-orientability.

Now we will prove the equivalence of 2) and 3). First, recall that a smooth function $H$ is submodular if 
\begin{equation}\label{smooth}
\hbox{$\frac{\partial ^2 H}{\partial p_i \partial p_j}<0$ for all $i \neq j$ and $p =(p_1,p_2,...,p_m)$.} 
\end{equation}
As was noted in \cite{P4}, compatibility is equivalent to the existence of changes of variables $p_i \mapsto q_i$ such that $H(q_1,q_2,...,q_m)$ satisfies (\ref{smooth}). It is therefore invariant under this sort of transformation. 
Assuming now that $H$ is compatible, define a change of coordinates as follows: set $q_1=p_1$ and, for $i \geq 2$, set 
\begin{eqnarray}\label{chcoord}
q_i = p_i & &\text{if } \frac{\partial^2 H}{\partial p_1 \partial p_i}<0\\ 
q_i = -p_i& & \text{if } \frac{\partial^2 H}{\partial p_1 \partial p_i}>0.  
\end{eqnarray}
It is then clear that  $\frac{\partial^2 H}{\partial q_1 \partial q_i}<0$ for all $i$.  For any distinct $i,j \neq 1$, we then have by the compatibility condition,
\begin{equation*}
\frac{\partial ^2 H}{\partial q_i \partial q_j} \Big(\frac{\partial ^2 H}{\partial q_1 \partial q_j}\Big)^{-1} \frac{\partial ^2 H}{\partial q_1 \partial q_i}<0 ,
\end{equation*}
which easily yields $\frac{\partial ^2 H}{\partial q_i \partial q_j} <0$.

On the other hand, to see that 3) implies 2), it is sufficient to note that submodularity implies compatibility, and that compatibility is invariant under changes of coordinates of the form $p_i \mapsto q_i(p_i)$. \end{proof}

We note that the condition of submodularity of $H$ on the positive half-space ${\R}_+^m$ is essentially equivalent to the following extended Hardy-Littlewood inequality: for all choices of real-valued non-negative measurable functions $(u_1,..., u_m)$ that vanish at infinity, we have
\begin{equation}\label{HL}
\int_{\mathbb{R}^N}H(u_1^*(x),...,u_m^*(x))dx \leq \int_{\mathbb{R}^N}H(u_1(x),..., u_m(x))dx, 
\end{equation}
where $u_i^*$ is the symmetric decreasing rearrangement of $u_i$ for $i=1,..., m$. The reason for this is intuitively clear; the submodularity condition ensures that moving weight from points of the form $ {\bf p}+h{\bf e}_i$ and ${\bf p}+k{\bf e}_j$ to ${\bf p}$ and $h{\bf e}_i+k{\bf e}_j+{\bf p}$ decreases the integral of $H(u)$ whenever $h,k >0$.  The symmetric decreasing rearrangment does precisely this, moving weight onto a monotone set without changing the level sets of the $u_i$. See for example Burchard-Hajaiej \cite{BH} and the references therein. 

We now describe the properties of multi-marginal mass transport under a compatible function cost $H$. Given probability measures $\mu_1,\mu_2,...,\mu_m$ on $\mathbb{R}$, if $H$-is compatible, then $\frac{\partial ^2 H}{\partial q_i \partial q_j} <0$ for the new variables $q_i$ defined in \eqref{chcoord} (i.e., $H$ is submodular  relative to the $q$ variables). Then as shown by Carlier \cite{C},  there is a unique solution to the optimal transportation problem \eqref{multmarg}, with marginals in these coordinates given by $q_i \# \mu_i$, given by $\gamma =(I,f_2,f_3,...f_m)_{\#}\mu_1$, where $f_i: \mathbb{R} \rightarrow \mathbb{R}$ is the unique increasing map pushing forward $\mu_1$ to $q_i \# \mu_i$. 

In the original $p$ coordinates, then, the unique solution to \eqref{multmarg} is $\gamma = (Id, g_1,g_2,...,g_m)_{\#}\mu_1$, where $g_i: \mathbb{R} \rightarrow \mathbb{R}$, defined by $g_i(p_i) = q_i^{-1}(f_i(p_1))$ is \textit{increasing} if $\frac{\partial ^2 H}{\partial p_1 \partial p_i}<0$ and \textit{decreasing} if $\frac{\partial ^2 H}{\partial p_1 \partial p_i}>0$. It is also the unique such map pushing forward $\mu_1$ to $(\mu_1,\mu_2,...,\mu_m)$.   For proofs and more discussion of the compatibility condition, see \cite{P4},  section 3.5.

We now discuss the notion of $H$-monotonicity introduced by Fazly-Ghoussoub in \cite{FG}.


\newtheorem{mono}[hmono]{Definition}
\begin{mono}\label{mono}

\begin{enumerate}
\item A function $u =(u_1,u_2,...,u_m)\in C^1(\mathbb{R}^N; \mathbb{R}^m)$ is said to be monotone if each $u_i$ is strictly monotone with respect to $x_N$; that is, if $\frac{\partial u_i}{\partial x_N} \neq 0$.
\item $u$ is said to be $H$-monotone if it is monotone and if for all $i \neq j$,
\begin{equation*}
\frac{\partial ^2 H}{\partial p_i \partial p_j} \frac{\partial u_i}{\partial x_N} \frac{\partial u_j}{\partial x_N} <0.
\end{equation*}
\end{enumerate}
\end{mono}

It is easy to see that the existence of an $H$-monotone function $u =(u_1,u_2,...,u_m)$ implies that $H$ is necessarily orientable  on the range $\Omega$ of $u$. More generally, we shall say -- as in \cite{FG}-- that $H$ is orientable at $u =(u_1,u_2,...,u_m)$, if there exist functions $(\theta_i(x))_{i=1}^m$ in $C(\R^N, \R)$, which do not change sign such that
\[
\hbox{$\frac{\partial ^2 H}{\partial p_i \partial p_j}(u(x))\theta_i(x) \theta_j(x) <0$ for all $x\in \R^N$.}
\]
We now note the following 
easy application of the above lemma coupled with Carlier's result. For our purposes it will often be useful to decompose $x \in \mathbb{R}^N$ into $(x', x_N) \in \mathbb{R}^{N-1} \times \mathbb{R}$.

\newtheorem{monooptim}[hmono]{Proposition}
\begin{monooptim}\label{monooptim}
Let $u=(u_1,u_2,...,u_m)$ be a bounded monotone function in $C^1(\mathbb{R}^N; \mathbb{R}^m)$.  Let $\mu$ be a probability measure on $\mathbb{R}$ that is absolutely continuous with respect to Lebesgue measure. For each $x' \in \mathbb{R}^{N-1}$, let $\mu_i^{x'}$ be the pushforward of $\mu$ by the map $u_i^{x'}:x_N \mapsto u_i(x',x_N)$ and set $\gamma^{x'} := (u_1^{x'},u_2^{x'},...,u_m^{x'})_{\#} \mu$.  Then the following are equivalent:
\begin{enumerate}
\item $u$ is $H$-monotone.
\item For each $x' \in \mathbb{R}^{N-1}$, the measure $\gamma^{x'}$ is optimal for the Monge-Kantorovich problem \eqref{multmarg}, when the marginals are given by $\mu_i^{x'}$.
\end{enumerate}
\end{monooptim}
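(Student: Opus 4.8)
The plan is to read off both implications from Carlier's description (recalled above) of the unique optimal plan for a compatible cost, once the monotonicity type of the relevant one--dimensional maps is identified. Two preliminary remarks are used throughout. First, since $u$ is monotone and $C^1$ and $\mathbb{R}^N$ is connected, each $\frac{\partial u_i}{\partial x_N}$ is continuous and nowhere zero, hence of a constant sign $\sigma_i\in\{+1,-1\}$; in particular, for fixed $x'$, the map $u_i^{x'}\colon x_N\mapsto u_i(x',x_N)$ is continuous and strictly monotone (increasing iff $\sigma_i=+1$), so it is a homeomorphism onto its image, $\mu_i^{x'}=(u_i^{x'})_\#\mu$ is atomless, and
\[
\gamma^{x'}=(\mathrm{Id},h_2,\dots,h_m)_\#\mu_1^{x'},\qquad h_i:=u_i^{x'}\circ(u_1^{x'})^{-1},
\]
where $h_i$ is the monotone map carrying $\mu_1^{x'}$ to $\mu_i^{x'}$, increasing iff $\sigma_i=\sigma_1$. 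Second, because the $\sigma_i$ are constant, $u$ is $H$--monotone if and only if $\sigma_i\sigma_j\frac{\partial^2H}{\partial p_i\partial p_j}(u(x))<0$ for all $x$ and all $i\neq j$; in particular $H$--monotonicity of $u$ forces $H$ to be orientable, hence (Lemma~\ref{equivcond}) compatible, on the range $\Omega$ of $u$.

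For $(1)\Rightarrow(2)$, assume $u$ is $H$--monotone, so $H$ is compatible on $\Omega$. Fix $x'$. Since $\mu_1^{x'}$ is atomless, Carlier's theorem in the form recalled above says the problem \eqref{multmarg} with marginals $\mu_i^{x'}$ has a unique minimizer $(\mathrm{Id},g_2,\dots,g_m)_\#\mu_1^{x'}$ (supported on the monotone curve $\{u(x',t)\}\subseteq\Omega$), where $g_i$ is the monotone map from $\mu_1^{x'}$ to $\mu_i^{x'}$ that is increasing when $\frac{\partial^2H}{\partial p_1\partial p_i}<0$ and decreasing when $\frac{\partial^2H}{\partial p_1\partial p_i}>0$. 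By $H$--monotonicity, $\frac{\partial^2H}{\partial p_1\partial p_i}<0$ precisely when $\sigma_1\sigma_i>0$, i.e. precisely when $h_i$ is increasing; hence $g_i$ and $h_i$ are monotone maps of the same type pushing $\mu_1^{x'}$ to $\mu_i^{x'}$, and since $\mu_1^{x'}$ is atomless they agree. Therefore $\gamma^{x'}$ is the unique minimizer, proving (2).

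For $(2)\Rightarrow(1)$ I would argue by contraposition. Suppose $u$ is monotone but not $H$--monotone, so for some $i\neq j$ and some $\bar x=(\bar x',\bar x_N)$ one has $\sigma_i\sigma_j\frac{\partial^2H}{\partial p_i\partial p_j}(u(\bar x))\geq 0$. After the orientation change $p_k\mapsto\sigma_kp_k$ we may assume every $\sigma_k=+1$, so that $\gamma^{\bar x'}$ is the comonotone coupling of $(\mu_1^{\bar x'},\dots,\mu_m^{\bar x'})$ while $\frac{\partial^2H}{\partial p_i\partial p_j}(u(\bar x))\geq 0$. The goal is a competitor with the same marginals and strictly smaller $H$--cost. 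Fix, by continuity, an interval $I\ni\bar x_N$ such that $\frac{\partial^2H}{\partial p_i\partial p_j}>0$ throughout the coordinate box spanned by the arc $\{u(\bar x',t):t\in I\}$ --- assuming first the strict inequality, the degenerate case following from the strict--inequality convention of Definitions~\ref{hmono}--\ref{submod} applied at a nearby point. Leaving $\gamma^{\bar x'}$ untouched off this arc, replace the comonotone coupling of the restricted marginals $(\mu_k^{\bar x'}|_I)_k$ by one that substantially de--correlates coordinates $i$ and $j$, coupling the other coordinates so as to disturb the remaining pairwise costs as little as the marginal constraints allow; since $\frac{\partial^2H}{\partial p_i\partial p_j}>0$ on the relevant region the $H$--cost strictly drops, contradicting (2).

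The main obstacle is exactly this construction. A local perturbation of $\gamma^{\bar x'}$ along the curve --- say, swapping one coordinate between two nearby masses --- only certifies the weaker ``summed'' inequalities $\sum_{k\neq i}\frac{\partial^2H}{\partial p_k\partial p_i}\frac{\partial u_k}{\partial x_N}\frac{\partial u_i}{\partial x_N}\leq 0$, because along a one--dimensional curve the variation of any one coordinate is locked to the variations of all the others; a nonlinearity violating the pairwise inequality for a single pair while satisfying these summed ones is not ruled out infinitesimally, so the improving competitor must be genuinely global (of the type underlying the Hardy--Littlewood inequality \eqref{HL} and, in reverse, Carlier's uniqueness theorem). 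Producing it and estimating its cost is where the real work lies; the remaining ingredients --- the change of variables, the identification of the transport maps, and the measure--theoretic points afforded by $\mu\ll$ Lebesgue --- are routine.
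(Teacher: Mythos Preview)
Your argument for $(1)\Rightarrow(2)$ is correct and is exactly the paper's approach; the paper simply calls this direction ``obvious'' in view of the Carlier/\cite{P4} description of the optimizer, and your identification of the maps $h_i=u_i^{x'}\circ(u_1^{x'})^{-1}$ with Carlier's $g_i$ makes this explicit.

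For $(2)\Rightarrow(1)$, however, you have a genuine gap that you yourself acknowledge: the competitor you need is never constructed, and your observation is correct that the natural two--point swap along the curve $t\mapsto u(\bar x',t)$ only yields the summed inequalities
\[
\sum_{k\neq i}\frac{\partial^2 H}{\partial p_k\partial p_i}\,\frac{\partial u_k}{\partial x_N}\,\frac{\partial u_i}{\partial x_N}\le 0,
\]
not the pairwise ones in Definition~\ref{mono}. The paper avoids this difficulty entirely. Instead of building a competitor, it invokes the structural result from \cite{P4} (Theorem~3.5.3): for a compatible cost the optimizer is unique and its support is $H$-monotone. Since $\gamma^{x'}$ is assumed optimal and its support is precisely the image of $(u_1^{x'},\dots,u_m^{x'})$, the pairwise inequalities for $u$ follow at once. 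In other words, the very uniqueness statement you invoked in the forward direction closes the backward one: optimality of $\gamma^{x'}$ forces it to coincide with the unique Carlier coupling, so each $h_i$ must have exactly the monotonicity type dictated by the sign of $\partial^2 H/\partial p_1\partial p_i$, which is the $H$-monotonicity of $u$.

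One further remark: the paper's argument tacitly uses compatibility (equivalently orientability) of $H$ on the range of $u$, this being the hypothesis under which the Carlier/\cite{P4} structure theorem applies. Your contrapositive attempt is implicitly trying to establish $(2)\Rightarrow(1)$ without that hypothesis, and the obstruction you ran into is genuine in that generality. Under compatibility, no competitor construction is needed at all.
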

\begin{proof}
$1) \rightarrow 2)$ is obvious. Assuming now $2)$, we note that for each $x' \in \mathbb{R}^{N-1}$, the measures $\mu_i^{x'}$ are absolutely continuous with respect to Lebesgue measure by the (strict) monotonicity of the $u_i$.  The support of the optimizer $\gamma^{x'}$ must be $H$-monotone, by the equivalence of $H$ orientability and $H$ compatibility, combined with  Theorem 3.5.3 in \cite{P4}.  As this support is exactly the image of $(u_1^{x'},u_2^{x'},...,u_m^{x'})$, this implies 

\begin{equation*}
\frac{\partial ^2 H}{\partial p_i \partial p_j} \frac{\partial u_i}{\partial x_N} \frac{\partial u_j}{\partial x_N} <0.
\end{equation*}
As this holds for any $x'$, this completes the proof. \end{proof}

\section{Decoupling systems in the presence of $H$-monotone solutions}

  Fazly and Ghoussoub \cite{FG} conjectured, and proved in dimensions $N \leq 3$, that $H$-monotone solutions of the system of elliptic PDEs on $\mathbb{R}^N$,
\begin{equation}\label{ellsys1}
\Delta u = \nabla H (u),
\end{equation}
where $u=(u_1,u_2,...,u_m)$ represents an $m$-tuple of functions, must be essentially $1$-dimensional; that is, each $u_i$ takes the form $u_i(x) = g_i(a_i\cdot x'-x_N)$ for some $a_i \in \mathbb{R}^{N-1}$.  This is a systems analogue of DeGiorgi's famous conjecture for monotone solutions of the Allen-Cahn equation \cite{GG} \cite{AC}.

They also showed that, for orientable systems in dimension $N=2$, all the components of a {\it stable solution}  $u=(u_1,u_2,...,u_m)$ have common level sets, which also happened to be  hyperplanes. We shall therefore say that the components $(u_i)_{i=1}^m$ have common level sets if for any $i\neq j$  and any $\lambda \in \mathbb{R}$, there exists some $\bar \lambda$ such that
\begin{equation*}
\{x: u_i(x) =\lambda\}=\{x: u_j(x) =\bar \lambda\}.
\end{equation*}
In light of this, the following conjecture seems reasonable:

\newtheorem{commlevset}[hmono]{Conjecture}
\begin{commlevset}\label{commlevset}
Any $H$-monotone solution to the system \eqref{ellsys1} has common level sets.
\end{commlevset}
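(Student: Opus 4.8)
The plan is to reduce the statement to a pointwise differential identity and then exploit the linearized system. By Definition \ref{mono}, monotonicity of $u$ gives $\partial u_i/\partial x_N \neq 0$, hence $\nabla u_i \neq 0$ everywhere, so every level set $\{u_i = \lambda\}$ is a smooth hypersurface which, by strict monotonicity in $x_N$, is a graph $x_N = \phi_{i,\lambda}(x')$ over an open subset of $\R^{N-1}$ and in particular is connected. A direct computation shows that $u_j$ is locally constant on $\{u_i = \lambda\}$ if and only if $\nabla u_j(x)$ is parallel to $\nabla u_i(x)$ at every point $x$ of that set, equivalently if and only if
\[
\frac{\partial u_j}{\partial x_k}\,\frac{\partial u_i}{\partial x_N} = \frac{\partial u_j}{\partial x_N}\,\frac{\partial u_i}{\partial x_k} \qquad (k = 1,\dots,N-1).
\]
Thus the conjecture is equivalent to the assertion that the gradients $\nabla u_1(x),\dots,\nabla u_m(x)$ are all parallel at every $x \in \R^N$; in the transport picture of Proposition \ref{monooptim} this says that the monotone ``slice profiles'' relating $u_i(x',\cdot)$ to $u_1(x',\cdot)$ (whose existence is guaranteed by that proposition) do not depend on $x'$. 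Fixing $k$, it then suffices to show that the ratios $\sigma_i := (\partial u_i/\partial x_k)/(\partial u_i/\partial x_N)$ are all equal.

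Next I would set up the linearized system. Replacing $u_i$ by $\pm u_i$ and adjusting $H$ accordingly (which preserves the form $\Delta u = \nabla H(u)$, the $H$-monotonicity, and the desired conclusion), we may assume $v_i := \partial u_i/\partial x_N > 0$ for every $i$, and then Definition \ref{mono} forces $\frac{\partial^2 H}{\partial p_i\partial p_j}(u(x)) < 0$ for all $i \neq j$ and all $x$. Differentiating $\Delta u_i = \partial_i H(u)$ in $x_N$ and in $x_k$ shows that both $v = (v_i)$ and $w = (w_i)$, $w_i := \partial u_i/\partial x_k$, solve the linearized system $\Delta \zeta_i = \sum_j \frac{\partial^2 H}{\partial p_i\partial p_j}(u)\,\zeta_j$. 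Writing $w_i = \sigma_i v_i$ (legitimate since $v_i > 0$) and subtracting, one obtains the degenerate elliptic equations
\[
\operatorname{div}(v_i^2\,\nabla \sigma_i) = \sum_{j \neq i} a_{ij}\,(\sigma_j - \sigma_i), \qquad a_{ij} := v_i v_j\,\frac{\partial^2 H}{\partial p_i \partial p_j}(u) = a_{ji} < 0 .
\]

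I would then run a weighted Caccioppoli/Liouville argument. Testing the $i$-th equation against $\sigma_i \eta^2$ with $\eta$ a cutoff, summing in $i$, and symmetrizing the coupling term gives the identity
\[
\int \sum_i v_i^2\,\eta^2\,|\nabla\sigma_i|^2 \;+\; \frac12\int \eta^2 \sum_{i\neq j}(-a_{ij})\,(\sigma_i-\sigma_j)^2 \;=\; -2\int \sum_i v_i^2\,\sigma_i\,\eta\,\nabla\eta\cdot\nabla\sigma_i .
\]
Since $v_i^2\sigma_i^2 = (\partial u_i/\partial x_k)^2$, Young's inequality bounds the right-hand side by $C R^{-2}\int_{B_{2R}}|\nabla u|^2$ for the standard cutoff on $B_{2R}$. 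The crucial input is then a density (energy) estimate for $H$-monotone solutions of the form $\int_{B_R}|\nabla u|^2 \le C R^{N-1}$ --- the systems analogue of the classical bound for monotone solutions of Allen--Cahn --- which I would establish under the structural hypotheses on $H$ used in \cite{FG} (e.g. $H \ge 0$), using monotonicity in $x_N$ together with a comparison with one-dimensional profiles. Granting it, the error is $O(R^{N-3})$, so letting $R \to \infty$ forces both terms on the left to vanish for $N \le 2$ (and, with a logarithmic-cutoff refinement in the spirit of Ambrosio--Cabr\'e, for $N = 3$): the second term yields $\sigma_i \equiv \sigma_j$, i.e.\ common level sets, and the first yields in addition that each $\sigma_i$ is constant, i.e.\ full one-dimensionality, thereby recovering and slightly sharpening the conclusions of \cite{FG} in these dimensions.

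The main obstacle is dimension. The density estimate confines the above to $N \le 3$, exactly the range where \cite{FG} already settles one-dimensional symmetry, and for scalar Allen--Cahn the analogous energy bound genuinely fails for $N \ge 4$ without further hypotheses, so a different idea is needed for the conjecture in full. The natural route is to exploit the multi-marginal transport structure more directly: Proposition \ref{monooptim} furnishes, for \emph{every} $x'$, an optimal plan $\gamma^{x'}$ with Carlier's graphical structure, and these plans must be linked rigidly through the elliptic system; turning ``linked rigidly'' into ``independent of $x'$'' --- for instance via a monotonicity formula adapted to the cost $H$, or a Liouville theorem for the \emph{competitive} system satisfied by $(\sigma_i)$ --- is where the real difficulty lies. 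It is worth noting the tension: the sign $a_{ij} < 0$ coming from $H$-monotonicity is precisely what gives the coupling term above its favorable sign, so that ``$\sigma_i = \sigma_j$'' would be essentially automatic once the boundary error is controlled; yet the same sign makes the system competitive rather than cooperative, obstructing the usual maximum-principle machinery whenever the boundary error cannot be killed by a dimension count.
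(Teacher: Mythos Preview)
The statement you are attempting to prove is labelled \emph{Conjecture} in the paper, not Theorem: the paper does \emph{not} claim or give a proof of it in general. What the paper does is (i) cite \cite{FG} for the case $N\le 3$, (ii) offer indirect support via Theorem~\ref{bounded_domains}, which shows by an $H$-monotone rearrangement / optimal transport argument that energy minimizers on cylinders $\Omega\times[0,1]$ with the stated boundary data must have common level sets, and (iii) give explicit examples in Section~5. There is therefore no ``paper's own proof'' to compare your attempt against.

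Your proposal is not a proof of the conjecture either, and to your credit you say so: the Caccioppoli/Liouville scheme you set up is exactly the linearization machinery from \cite{FG} (derive $\operatorname{div}(v_i^2\nabla\sigma_i)=\sum_{j\neq i}a_{ij}(\sigma_j-\sigma_i)$, test against $\sigma_i\eta^2$, symmetrize, and exploit $a_{ij}<0$), and it closes only when the boundary term $R^{-2}\int_{B_{2R}}|\nabla u|^2$ tends to zero, i.e.\ under an $O(R^{N-1})$ energy bound and hence only for $N\le 3$ (with the Ambrosio--Cabr\'e logarithmic cutoff at $N=3$). That is precisely the range already handled in \cite{FG}, so your write-up recovers known results rather than proving the conjecture. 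Note also that the density bound $\int_{B_R}|\nabla u|^2\le CR^{N-1}$ is asserted but not proved; for systems this step is not automatic and deserves a full argument or a precise citation.

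It is worth contrasting approaches. Your route is purely PDE-theoretic (linearized equation plus energy decay), with optimal transport entering only as motivation. The paper's own ``support'' in Section~4 goes in a different direction: it works on a bounded cylinder, uses the equivalence of $H$-orientability with compatibility/submodularity to invoke uniqueness and monotone structure of the optimizer in \eqref{multmarg}, and combines this with a Polya--Sz\H{o}g\H{o}-type rearrangement lemma to show that any competitor can be replaced by a one-dimensional $H$-monotone rearrangement with lower energy. That argument is dimension-free but applies to minimizers on finite domains with prescribed boundary values, not to arbitrary $H$-monotone entire solutions. Neither route settles the conjecture in full, and your concluding paragraph identifies the genuine obstruction correctly.
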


This can be seen as complementary to the conjecture of Fazly and Ghoussoub, which asserts that --at least in low dimensions (say $N\leq 8$) -- the level sets of $H$-monotone solutions are hyperplanes (but possibly different hyperplanes for each component $u_i$ of the solution).  Note that an interesting conjecture of Beresetycki et al., for the case when $m=2$ and $H(p_1,p_2) = \frac{1}{2}p_1^2p_2^2$ amounts to a combination of these two (ie, $H$-monotone solutions take the form $u_1(x)=U_1(a\cdot x), u_2(x)=U_2(a\cdot x)$ for some common $a \in \mathbb{R}^N$).
Fazly and Ghoussoub proved this conjecture in $N\leq 3$ dimensions \cite{FG}; we will provide some additional support for it in section 4 below, as well as several examples of solutions exhibiting this behaviour in section 5.  We now establish a result on the decoupling of the system \eqref{ellsys1}.

\newtheorem{decouple}[hmono]{Theorem}
\begin{decouple}\label{decouple}
Let $u=(u_1,...,u_m)$ be a bounded  solution to system \eqref{ellsys1}. 
\begin{enumerate}

\item If $u=(u_1,...,u_m)$ is monotone, then there exist functions $V_i(p_i,x')$ such that $u$ solves the system of decoupled equations:
\begin{equation}
\hbox{$\Delta u_i = \frac{\partial V_i}{\partial p_i}(u_i(x), x')$\,\, for $i=1,..., m.$}
\end{equation}
 Furthermore, along the solution, we have
  \begin{equation}
\hbox{$ \sum_{i=1}^m V_i(u_i(x), x') =H(u_1(x),u_2(x),...,u_m(x))$ for $x\in \mathbb{R}^N$}.
 \end{equation}
 \item If  $u=(u_1,...,u_m)$ is $H$-monotone, then for all $p=(p_1,p_2,...,p_m) \in \mathbb{R}^m$,
 \begin{equation}
 \sum_{i=1}^m V_i(p_i, x') \leq H(p_1,p_2,...,p_m).
 \end{equation}
\item If the $u_i$ have common level sets, then the $V_i$ can be chosen to be independent of $x'$, that is $V_i(p_i, x')=V_i(p_i)$.
\end{enumerate}
\end{decouple}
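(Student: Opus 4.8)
The plan is to handle all three parts through a single mechanism: on each horizontal line $\{x'=\mathrm{const}\}$, the monotonicity of $u$ makes every $u_i^{x'}\colon x_N\mapsto u_i(x',x_N)$ a $C^1$ diffeomorphism onto an open interval $R_i^{x'}$, so the solution curve $\Gamma^{x'}=\{u(x',x_N):x_N\in\mathbb R\}$ may be parametrized by any one of its coordinates; in particular $x\mapsto\partial_{p_i}H(u(x))$, restricted to such a line, is a function of $u_i(x)$ alone. For part (1), I would fix $x'$, set $g_i^{x'}:=u_i^{x'}\circ(u_1^{x'})^{-1}\colon R_1^{x'}\to R_i^{x'}$ (a $C^1$ monotone diffeomorphism with $g_1^{x'}=\mathrm{id}$), observe that $\partial_{p_i}H(u(x))$ equals $h_i(u_i(x),x')$ for the continuous function $h_i(p_i,x'):=\partial_{p_i}H\big(g^{x'}((g_i^{x'})^{-1}(p_i))\big)$, and define $V_i(p_i,x'):=\int^{p_i}h_i(t,x')\,dt$ (extended off $R_i^{x'}$ arbitrarily, since that extension plays no role in part (1)). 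The PDE then reads $\Delta u_i=\partial_{p_i}V_i(u_i(x),x')$. For the identity $\sum_i V_i(u_i(x),x')=H(u(x))$, I would set $W(x):=\sum_i V_i(u_i(x),x')-H(u(x))$ and compute $\partial_{x_N}W=\sum_i\big(\partial_{p_i}V_i(u_i,x')-\partial_{p_i}H(u)\big)\partial_{x_N}u_i=0$; hence $W$ is independent of $x_N$, and the free additive constant in $V_1(\cdot,x')$ can be chosen so that $W\equiv0$.

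For part (2), $H$-monotonicity lets me invoke Proposition \ref{monooptim}: fixing an absolutely continuous probability measure $\mu$ of full support (say a Gaussian), the plan $\gamma^{x'}=(u_1^{x'},\dots,u_m^{x'})_\#\mu$ is optimal for \eqref{multmarg} with marginals $\mu_i^{x'}$, for every $x'$. The Kantorovich duality recalled in the introduction (using $H$ bounded below, as in the optimal-transport results quoted there; this is harmless since only the values of $H$ near the bounded range of $u$ intervene) then supplies potentials $V_i(\cdot,x')$ with $\sum_i V_i(p_i,x')\le H(p)$ for all $p\in\mathbb R^m$ — precisely the assertion of part (2) — and with equality on $\mathrm{supp}(\gamma^{x'})=\overline{\Gamma^{x'}}$, hence on $\Gamma^{x'}$; evaluating at points of $\Gamma^{x'}$ recovers $\sum_i V_i(u_i(x),x')=H(u(x))$. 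It then remains to check that these $V_i$ still decouple the system, i.e. $\partial_{p_i}V_i(u_i(x),x')=\partial_{p_i}H(u(x))$. I would prove this by showing $V_i(\cdot,x')$ is differentiable at each $p_i\in R_i^{x'}$ with $\partial_{p_i}V_i(p_i,x')=\partial_{p_i}H(\bar p)$, where $\bar p$ is the unique point of $\Gamma^{x'}$ with $i$-th coordinate $p_i$: from $\sum_j V_j\le H$ with equality at $\bar p$, freezing all coordinates but the $i$-th at their values in $\bar p$ gives $V_i(p_i',x')-V_i(p_i,x')\le H(\dots,p_i',\dots)-H(\bar p)$; running the same comparison from the point $\bar p'$ of $\Gamma^{x'}$ lying over $p_i'$ gives the reverse inequality with $\bar p'$ in place of $\bar p$; dividing by $p_i'-p_i$, applying the mean value theorem to $H$, and letting $p_i'\to p_i$ (so $\bar p'\to\bar p$ by continuity of $s\mapsto g^{x'}(s)$) squeezes the difference quotient to $\partial_{p_i}H(\bar p)$. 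Since $\bar p=u(x',x_N)$ for the appropriate $x_N$, this equals $\partial_{p_i}H(u(x))=\Delta u_i$.

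For part (3), ``common level sets'' says each level set of $u_i$ coincides with a level set of $u_1$, so there is a strictly monotone $C^1$ function $\psi_i$, \emph{independent of $x'$}, with $u_i=\psi_i(u_1)$ (concretely $\psi_i$ agrees with $g_i^{x'}$ on $R_1^{x'}$ for every $x'$, and these pieces agree on overlaps, gluing to a function on the interval $\bigcup_{x'}R_1^{x'}$). Then $h_i$ from part (1) loses its $x'$-dependence: $h_i(p_i)=\partial_{p_i}H\big(\psi(\psi_i^{-1}(p_i))\big)$ with $\psi=(\psi_1,\dots,\psi_m)$. Setting $V_i:=\int^{p_i}h_i$, the decoupled system holds as before, and now $W(x)=\sum_i V_i(u_i(x))-H(u(x))$ has \emph{all} partial derivatives zero — because $V_i'(u_i(x))=\partial_{p_i}H(\psi(u_1(x)))=\partial_{p_i}H(u(x))$ identically, using $\psi_i^{-1}(u_i(x))=u_1(x)$ and $\psi(u_1(x))=u(x)$ — so $W$ is a genuine constant on the connected set $\mathbb R^N$, which I absorb into $V_1$. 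The resulting $V_i$ depend only on $p_i$.

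The step I expect to be the real obstacle is the differentiability claim in part (2). Its two ingredients are: (i) over each coordinate value the support of the optimal plan contains \emph{exactly one} point — this is where the one-dimensional structure of optimizers for orientable/compatible costs (Lemma \ref{equivcond}, Proposition \ref{monooptim}, and Carlier's theorem) is genuinely used — and (ii) $H\in C^2$, which is what forces the two one-sided difference-quotient bounds to the same limit. Everything else is routine: bookkeeping with additive constants in parts (1) and (3); the minor point that the Kantorovich machinery wants $H$ bounded below; and checking that the pieces $\psi_i$ in part (3) glue to a single monotone function.
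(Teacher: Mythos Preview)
Your proposal is correct and follows essentially the same route as the paper: construct $V_i$ by integrating $\partial_{p_i}H$ along the solution curve $\Gamma^{x'}$ for part (1), invoke Proposition \ref{monooptim} together with Kantorovich duality for part (2), and observe that common level sets make the curve $\Gamma^{x'}$ independent of $x'$ for part (3). The one place you go beyond the paper is part (2): the paper simply asserts that ``the $V_i$ defined above play the role of Kantorovich potentials,'' whereas you supply the squeeze argument showing the dual potentials are differentiable on $R_i^{x'}$ with derivative $\partial_{p_i}H(\bar p)$, which is exactly what is needed to identify them with the $V_i$ built in part (1).
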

\begin{proof}
1) Fix $x' \in \mathbb{R}^{N-1}$, and define $V_i(\cdot, x')$ on the range of $x_N \mapsto u_i(x',x_N)$ as follows. For $p_i$ in this range, monotonicity ensures the existence of a unique $x_N =x_N(p_i)$ such that $p_i = u_i(x', x_N)$. We can therefore set 
\begin{equation*}
\frac{\partial V_i}{\partial p_i}(p_i, x') = \frac{\partial H}{\partial p_i}(u(x',x_N)).
\end{equation*}
It follows by construction that 
\begin{equation*}
\Delta u_i = \frac{\partial H}{\partial p_i}(u(x',x_N)) =\frac{\partial V_i}{\partial p_i}(u_i, x').
\end{equation*}
Note that, as each $V_i(x',\cdot)$ is defined only up to an arbitrary constant, we can choose the constants so that for $x_N =0$,
\begin{equation*}
 \sum_{i=1}^m V_i(u_i(x', 0), x') -H(u_1(x', 0),u_2(x',0),...,u_m(x',0))=0.
\end{equation*}
Now note that by the definition of $V_i$, we have
\begin{eqnarray*}
&& \frac{\partial}{\partial x_N} \Big(\sum_{i=1}^m V_i(u_i(x', x_N), x') -H(u_1(x',  x_N),u_2(x', x_N),...,u_m(x', x_N))\Big)\hfill\\
  &=&\sum_{i=1}^m \frac{\partial V_i}{\partial p_i}(u_i(x', x_N), x')\frac{\partial u_i}{\partial x_N} -\sum_{i=1}^m \frac{\partial H}{\partial p_i}(u_1(x', x_N),u_2(x',x_N),...,u_m(x',x_N))\frac{\partial u_i}{\partial x_N}\\
 &=&0.
\end{eqnarray*}
 Therefore, we have 
\begin{equation*}
 \sum_{i=1}^m V_i(u_i(x', x_N), x') -H(u_1(x', x_N),u_2(x',x_N),...,u_m(x',x_N))=0
 \end{equation*}
everywhere on $\mathbb{R}^{N}$. 

2) Now suppose that $u$ is $H$-monotone.  For fixed $x' \in \mathbb{R}^{N-1}$, let  $\gamma^{x'}$ and $\{\mu_i^{x'}, i=1,...,m\}$ be defined as in Proposition \ref{monooptim}. The measure $\gamma^{x'}$ is then optimal for the Monge-Kantorovich problem \eqref{multmarg} with given  marginals $\mu_i^{x'}, i=1,...,m$.  In this case, the $V_i$ defined above play the role of Kantorovich potentials, and so we have 
\begin{equation*}
 \sum_{i=1}^m V_i(p_i, x') \leq H(p_1,p_2,...,p_m).
 \end{equation*}
everywhere.

3) In this case, the  image of the map $(u_1^{x'},u_2^{x'},...,u_m^{x'})$ is independent of $x'$ .  The measures $\gamma^{x'}$ are then all supported on the same set, and so we can choose the $V_i(p_i,x') =V_i(p_i)$ to be independent of $x'$. The image of $(u_1,u_2,...,u_m)$ is equal to the image of $(u_1^{x'},u_2^{x'},...,u_m^{x'})$ for any fixed $x'$, and the results in \cite{P4} imply that any measure supported on this set is optimal for its marginals. 
\end{proof}
Note that the preceding proof yields an explicit expression for the $V_i$'s:
\begin{equation}
V_i(\tilde p_i,x') = \int_0^{\tilde p_i}  \frac{\partial H}{\partial  p_i}(u(x',x_N(p_i)))dp_i + H(u(x',0)).
\end{equation}

\newtheorem{oned}[hmono]{Remark} \rm 
\begin{oned}\rm
If $u$ is $H$-monotone such that all its components  $(u_i)_i$ have common level sets,  then the image of $(u_1,u_2,...,u_m)$ is a $1$-dimensional set. By $H$-monotonicity, the results in \cite{P4} imply that any measure supported on this set is optimal for its marginals.  In particular, for any measure $\mu$ on $\mathbb{R}^N$, absolutely continuous with respect to Lebesgue measure, the measure $\gamma = (u_1,u_2,...,u_m)_{\#}\mu$ is optimal for its marginals $\mu_i = u_{i\#}\mu$. 
\end{oned}
As an immediate application of Theorem \ref{decouple}, we obtain the following Modica type estimate.

\newtheorem{modica}[hmono]{Corollary}
\begin{modica}
Suppose that $u$ is a bounded monotone solution of \eqref{ellsys1} such that all its components  $(u_i)_i$ have common level sets.  Then, letting $\bar V_i = \min_{p_i \in \text{Range}(u_i)} V_i(p_i)$, we have

\begin{equation}
\frac{1}{2}\sum_{i=1}^m|\nabla u_i(x_i)|^2 \leq H(u) -\sum_{i=1}^m\bar V_i.
\end{equation} 
\end{modica}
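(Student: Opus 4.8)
The plan is to reduce the claim to the classical one–dimensional Modica gradient bound, applied componentwise after decoupling the system. Since $u$ is a bounded monotone solution of \eqref{ellsys1} whose components have common level sets, parts (1) and (3) of Theorem \ref{decouple} furnish functions $V_i=V_i(p_i)$, \emph{independent of $x'$}, such that each $u_i$ solves the scalar semilinear equation $\Delta u_i = V_i'(u_i)$ on $\mathbb{R}^N$, together with the pointwise identity $\sum_{i=1}^m V_i(u_i(x)) = H(u(x))$ for all $x\in\mathbb{R}^N$. Because $u$ is bounded, $\mathrm{Range}(u_i)$ is a bounded interval, so $\bar V_i = \min_{p_i\in\mathrm{Range}(u_i)} V_i(p_i)$ is finite, and $W_i := V_i - \bar V_i$ is a nonnegative potential on $\mathrm{Range}(u_i)$ for which $u_i$ is a bounded entire solution of $\Delta u_i = W_i'(u_i)$.

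Next I would invoke Modica's pointwise gradient estimate \cite{M} for a bounded entire solution $v$ of $\Delta v = W'(v)$ with $W$ nonnegative on the range of $v$: this yields, for each $i$ and every $x\in\mathbb{R}^N$,
\[
\frac{1}{2}|\nabla u_i(x)|^2 \le W_i(u_i(x)) = V_i(u_i(x)) - \bar V_i.
\]
Summing these $m$ inequalities and substituting $\sum_{i=1}^m V_i(u_i(x)) = H(u(x))$ gives
\[
\frac{1}{2}\sum_{i=1}^m |\nabla u_i(x)|^2 \le H(u(x)) - \sum_{i=1}^m \bar V_i,
\]
which is the asserted estimate.

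The delicate point is ensuring that the decoupled nonlinearities $V_i$ — defined only implicitly, via $V_i'(p_i) = \frac{\partial H}{\partial p_i}(u(x', x_N(p_i)))$ where $x_N(p_i)$ inverts $x_N\mapsto u_i(x',x_N)$ — are regular enough (say $C^{1,1}$, hence a legitimate potential for the scalar Modica bound). This is precisely where the strict monotonicity of $u$ in $x_N$ enters: it makes $p_i\mapsto x_N(p_i)$ as smooth as $u_i$, and since a bounded solution of \eqref{ellsys1} with $H\in C^2$ is $C^{2,\alpha}$ by elliptic regularity while $\nabla H\in C^1$, composition shows $V_i'\in C^1$. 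I expect this verification — together with, if one wants to match the sharpest hypotheses of \cite{M}, a routine elliptic bootstrap to upgrade the smoothness of $u$ and of $W_i$ — to be the only real obstacle; the common–level–set hypothesis is used \emph{only} through part (3) of Theorem \ref{decouple}, to make the decoupled equations autonomous, and the boundedness of $u$ is used \emph{only} to guarantee each $\bar V_i$ is finite so that the shifted potentials $W_i$ are genuinely nonnegative.
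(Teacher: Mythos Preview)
Your proof is correct and follows essentially the same route as the paper: invoke parts (1) and (3) of Theorem~\ref{decouple} to obtain autonomous decoupled equations $\Delta u_i = V_i'(u_i)$ with $\sum_i V_i(u_i)=H(u)$, apply the scalar Modica inequality \cite{M} componentwise to get $\tfrac{1}{2}|\nabla u_i|^2 \le V_i(u_i)-\bar V_i$, and sum. Your additional remarks on the regularity needed for the scalar Modica bound are more careful than what the paper spells out, but the argument is otherwise identical.
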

\begin{proof}
By Theorem \ref{decouple}, $u$ solves the decoupled system
$\Delta u_i = \frac{ \partial V_i}{\partial p_i}(u_i).$
By Modica's well known inequality for single equations, we have $\frac{1}{2}|\nabla u_i(x_i)|^2 \leq  V_i(u_i)-\bar V_i$.  Summing over $i$ and using Theorem \ref{decouple}, part 1), completes the proof.  
\end{proof}
\newtheorem{dim2}[hmono]{Remark}
\begin{dim2}\rm
This corollary applies to, for example, bounded, monotone solutions in $\mathbb{R}^2$ with an orientable $H$.  Fazly and Ghoussoub \cite{FG} showed that in this case bounded stable \footnote{See \cite{FG} for the definition of stability.} solutions satisfy  
\begin{equation}
\nabla u_j  =C_{i,j}\nabla u_i
\end{equation} 
for constants $C_{i,j}$ with signs opposite the signs of $\frac{\partial ^2 H}{\partial p_i \partial p_j}$ \cite{FG}.  When the level sets of $u_i(x',x_N)=g_i(a_i\cdot x' -x_N)$ are hyperplanes, this implies that $a_i= a_j:=a$ for all $i,j$. Therefore, the $u_i$'s share the same level sets and the corollary applies.

In fact, in this case we can prove even more; we can remove the $\bar V_i$ terms from the inequality.  A simple computation yields 
\begin{equation*}
(|a|^2+1)g_i''(a\cdot x' -x_N) =\Delta u_i(x)=V_i'(g_i(a\cdot x' -x_N)).
\end{equation*}
Denoting the argument $a\cdot x' -x_N$ by $y$, multiplying the equation by $g'(y)$ and integrating, we obtain

\begin{equation}\label{1dmodica}
\frac{1}{2}(|a|^2+1)g_i'(y)^2 = V_i(g_i(y)) -C_i,
\end{equation} 
for some constants $C_i$.  As the solution $g_i(a\cdot x' -x_N)$ is bounded and monotone, we can take the limit as $y \rightarrow \infty$; as we must have $\lim_{y \rightarrow \infty} g'(y) =0$, we obtain $C_i = V_i(\lim_{y \rightarrow \infty}g_i(y))$.  Now, summing over $i$ in \eqref{1dmodica} yields

\begin{equation}\label{1dmodicasys}
\frac{1}{2}(|a|^2+1)\sum_{i=1}^m g_i'(y)^2 = \sum_{i=1}^mV_i(g_i(y)) -\sum_{i=1}^mC_i =H(g_1(y),g_2(y),...,g_m(y))-\sum_{i=1}^mC_i.
\end{equation} 
Taking the limit as $y \rightarrow \infty$ of the equality 
\begin{equation*}
H(g_1(y),g_2(y),...,g_m(y)) =\sum_{i=1}^mV_i(g_i(y))
\end{equation*}
yields 
\begin{equation*}
H(\lim_{y \rightarrow \infty}g_1(y),\lim_{y \rightarrow \infty}g_2(y),...,\lim_{y \rightarrow \infty}g_m(y)) =\sum_{i=1}^mC_i.
\end{equation*}
If $H$ is nonnegative, \eqref{1dmodicasys} yields

\begin{eqnarray*}
\frac{1}{2}\sum_{i=1}^m |\nabla u_i(x_i)|^2&=&\frac{1}{2}(|a|^2+1)\sum_{i=1}^m g_i'(y)^2\\
 &\leq& H(g_1(y),g_2(y),...,g_m(y))\\
&=& H(u_1(x),u_2(x),...,u_m(x)).
\end{eqnarray*} 
\end{dim2}
\newtheorem{antimono}[hmono]{Remark}
\begin{antimono}\rm
It is not clear to us whether the approach above provides any more information about $H$-monotone solutions when the solutions are not $1$-dimensional, but surprisingly, it does imply more about solutions with the {\it opposite} geometry. Indeed, suppose that $-H$ is orientable, and that $u$ is a bounded $-H$ monotone solution to \eqref{ellsys1}; in other words, for all $i \neq j$

\begin{equation*}
\frac{\partial ^2 H}{\partial p_i \partial p_j} \frac{\partial u_i}{\partial x_N} \frac{\partial u_j}{\partial x_N}>0.
\end{equation*}
Then for each $x'$, the measure $\gamma^{x'}$ {\it maximizes} (rather than minimizes) the multi-marginal Kantorovich functional \eqref{multmarg} and the functions $V_i$ from Theorem \ref{decouple} satisfy the constraint 
\begin{equation}\label{potentials}
\sum V_i(x',p_i) \geq H(p_1,p_2,...,p_m).
\end{equation}
If the $u_i$ also have common level sets, then the $V_i$ are independent of $x'$, by Theorem \ref{decouple}.  If $H$ is everywhere nonnegative, taking infimums in \eqref{potentials} yields $\sum_{i=1}^m \bar V_i \geq 0$; combining this with the previous corollary implies the following Modica estimate for these types of solutions:

\begin{equation*}
\frac{1}{2}\sum_{i=1}^m|\nabla u_i(x_i)|^2 \leq H(u).
\end{equation*} 
\end{antimono}
\section{One dimensional solutions with common level sets and energy minimizers on finite domains}
In this section, we exploit the connection with optimal transport to prove that, under certain conditions, minimizers of the energy $E$ on bounded domains have common level sets and are $1$-dimensional  

The proposition below is more closely related to a Gibbons type conjecture than one of De Giorgi type\footnote{The Gibbons type conjecture for $H$-monotone solutions is a DeGiorgi conjecture with an additional assumption that the functions $u_i$ converge uniformly in $x'$ as $x_N \rightarrow \pm \infty$.  Under this condition, the boundary conditions in Theorem \ref{bounded_domains} are almost satisfied, if $[0,1]$ is replaced with a large enough interval, for any $\Omega$}.  We feel this provides some explanation for why $H$-monotone solutions seem to share common level sets as well as being $1$-dimensional, as conjectured in Conjecture \ref{commlevset} here and Conjecture 2 in \cite{FG}, respectively (and proven for stable solutions when $N \leq 2$ in \cite{FG}).

Heuristically, it is the $H$-term in the energy that forces the $u_i$ to have the same level sets, so the image of $(u_1,....u_m)$ is optimal for its marginals in \eqref{multmarg}.  It is the Dirichlet term that then forces these level sets to be hyperplanes.  For a single equation, one dimensional rearrangements reduce the Dirichlet energy because of Polya-Sz\"ego type arguments.  On the other hand, the term $\int H(u_1)dx$ is unchanged by rearrangement, as $u_1 \# dx$ remains the same.  For systems, $\int H(u_1,u_2,...,u_m)dx$ will be lowered if the appropriate rearrangements $\bar u_i$ can be made so that $(\bar u_1,...,\bar u_m)\#dx$ solves the optimal transport problem \eqref{multmarg} with marginals $u_i \# dx$.  For an orientable $H$, this is possible.  For a non-orientable $H$, the solution to the optimal transport problem may concentrate on a higher dimensional set. 

Let $\Omega \subset \mathbb{R}^{N-1}$ be a bounded domain.  For $u=(u_1,...u_m) :\Omega \times[0,1] \rightarrow \mathbb{R}^m$, set

\begin{equation*}
E(u) =\int_{\Omega \times [0,1]}\frac{1}{2}\sum_{i=1}^m|\nabla u_i|^2+H(u)dx
\end{equation*}

\newtheorem{bounded_domains}{Theorem}[section]
\begin{bounded_domains}\label{bounded_domains}
Suppose $H$ is orientable  and that the set $\{a_i,b_i\}_{i=1}^m$ of constants is consistent with the orientability condition,  that is
\begin{equation}
\hbox{$H_{i,j}(u)(b_i-a_i)(b_j-a_j) <0$ for all $i \neq j$.}
\end{equation}
Assume $u$ minimizes the energy $E$ in the class of functions $v=(v_1,...v_m) :\Omega \times[0,1] \rightarrow \mathbb{R}^m$ satisfying
\begin{enumerate}
\item $v_i(x',0) = a_i$, $v_i(x',1) =b_i$,
\item $v$ is $H$-monotone with respect to $x_N$.
\end{enumerate}
Then, $u(x',x_N) = u(x_N)$ is one dimensional and the components $u_i's$ have common level sets.
\end{bounded_domains}
\begin{proof}
Suppose $u$ is a minimizer satisfying the assumptions in the theorem.  Set $\mu_i = u_i\# \mu$, where $\mu$ is Lebesgue measure on $\Omega \times [0,1]$.  If $a_i < b_i$, define $\bar u_i(x) =\bar u_i(x_N)$ implicitly by
\begin{equation*}
\mu(\Omega \times [0,x_N]) = \mu_i(a_i, \bar u_i(x_N));
\end{equation*}
note that this is eqivalent to the explicit formula 

\begin{equation*}
\bar u_i(x_N) =\inf \{y: \mu(\Omega \times [0,x_N]) \leq \mu_i(a_i,y )\}.
\end{equation*}

If $a_i >b_i$, define $\bar u_i(x) =\bar u_i(x_N)$ implicitly by
\begin{equation*}
\mu(\Omega \times [0,x_N]) = \mu_i(\bar u_i(x_N), a_i);
\end{equation*}
We will refer to $\bar u = (\bar u_1, \bar u_2,...,\bar u_m)$ as the $H$-monotone rectangular rearrangment of $u$.  It is clear that $(\bar u_1, ....,\bar u_m) \# \mu$ is the unique solution to the optimal transport problem with marginals $\mu_i$ and cost $H$, as its support is a $1$-dimensional $H$-monotone set.  Therefore, 
\begin{equation*}
\int H(u_1,...u_m)dx \geq \int H(\bar u_1,...\bar u_m)dx
\end{equation*}
and, by uniqueness in the optimal transport problem, we can have equality only if the $u_i$ have common level sets, in which case $(u_1, ...., u_m) \# \mu=(\bar u_1, ....,\bar u_m) \# \mu$

To complete the proof, we need only to show that $\int_{\Omega \times[0,1]}|\nabla \bar u_i|^2dx \leq \int_{\Omega \times[0,1]}|\nabla u_i|^2 dx$, and we can have equality only if the level sets of $u_i$ are vertical hyperplanes.  We establish this in a separate Lemma below. 
\end{proof}
\newtheorem{rectpz}[bounded_domains]{Lemma}
\begin{rectpz}
Suppose $u_i:\Omega \times [0,1] \rightarrow \mathbb{R}$ satisfies 
\begin{enumerate}
\item $u_i(x',0) = a_i$, $u_i(x',1) =b_i$,
\item $\frac{\partial u_i}{\partial x_N} >0$.
\end{enumerate}
Define the rectangular rearrangement $\bar u_i$ of $u_i$ via 

\begin{equation*}
\mu(\Omega \times [0,x_N]) = \mu_i(a_i, \bar u_i(x_N)),
\end{equation*}
where $\mu_i = u_i\# \mu$, and $\mu$ is Lebesgue measure on $\Omega \times [0,1]$.
Then,
\begin{equation*}
\int_{\Omega \times[0,1]}|\nabla \bar u_i|^2dx \leq \int_{\Omega \times[0,1]}|\nabla u_i|^2 dx.
\end{equation*}
\end{rectpz}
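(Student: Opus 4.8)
The plan is to recognize the claimed estimate as a P\'olya--Szeg\H{o}-type inequality for a one-directional (``$x_N$-monotone'') rearrangement and to prove it via the coarea formula, comparing $|\nabla u_i|$ with $|\nabla\bar u_i|$ level set by level set. I will work throughout with $u_i$ Lipschitz (the general $H^1$ case, needed for $\int|\nabla u_i|^2$ to make sense, follows by approximation). Let $F_i(t)=\mu(\{u_i\le t\})$ be the distribution function of $u_i$. Because $u_i(x',0)=a_i$, $u_i(x',1)=b_i$ and $\partial u_i/\partial x_N>0$, the function $F_i$ is a continuous, strictly increasing bijection of $[a_i,b_i]$ onto $[0,|\Omega|]$, and the defining relation $\mu(\Omega\times[0,x_N])=\mu_i(a_i,\bar u_i(x_N))$ reads $|\Omega|\,x_N=F_i(\bar u_i(x_N))$; thus $\bar u_i=F_i^{-1}(|\Omega|\,\cdot\,)$ is increasing with $\bar u_i(0)=a_i$, $\bar u_i(1)=b_i$, $\bar u_i\#\mu=\mu_i$, and (using the coarea lower bound $F_i'\ge|\Omega|/\mathrm{Lip}(u_i)$) is itself Lipschitz, so $\bar u_i\in H^1$.

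The key step is a pointwise-in-$t$ estimate. For a.e.\ $t\in(a_i,b_i)$ the coarea formula yields $F_i'(t)=\int_{\{u_i=t\}}|\nabla u_i|^{-1}\,d\mathcal H^{N-1}$ together with $\int_{\Omega\times[0,1]}|\nabla u_i|^2\,dx=\int_{a_i}^{b_i}\big(\int_{\{u_i=t\}}|\nabla u_i|\,d\mathcal H^{N-1}\big)\,dt$ (here $|\nabla u_i|\ge\partial u_i/\partial x_N>0$ a.e.), and Cauchy--Schwarz on the level set, $\mathcal H^{N-1}(\{u_i=t\})^2\le\big(\int_{\{u_i=t\}}|\nabla u_i|\,d\mathcal H^{N-1}\big)\big(\int_{\{u_i=t\}}|\nabla u_i|^{-1}\,d\mathcal H^{N-1}\big)$, gives
\begin{equation*}
\int_{\{u_i=t\}}|\nabla u_i|\,d\mathcal H^{N-1}\ \ge\ \frac{\mathcal H^{N-1}(\{u_i=t\})^2}{F_i'(t)}.
\end{equation*}
The other ingredient is geometric: since $\partial u_i/\partial x_N>0$ and $a_i<t<b_i$, for each $x'\in\Omega$ there is a unique $\phi_t(x')\in(0,1)$ with $u_i(x',\phi_t(x'))=t$, so $\{u_i=t\}$ is exactly the graph of $\phi_t$ over all of $\Omega$, and therefore $\mathcal H^{N-1}(\{u_i=t\})=\int_\Omega\sqrt{1+|\nabla_{x'}\phi_t|^2}\,dx'\ge|\Omega|$, with equality iff $\phi_t$ is constant. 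Hence $\int_{\{u_i=t\}}|\nabla u_i|\,d\mathcal H^{N-1}\ge|\Omega|^2/F_i'(t)$ for a.e.\ $t$.

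To conclude I identify the right-hand side with the level set contribution of $\bar u_i$: as $\bar u_i$ depends on $x_N$ only, $\{\bar u_i=t\}$ is the slice $\Omega\times\{F_i(t)/|\Omega|\}$, on which $|\nabla\bar u_i|=\bar u_i'$ is constant and equal to $|\Omega|/F_i'(t)$ (differentiate $|\Omega|x_N=F_i(\bar u_i(x_N))$), so $\int_{\{\bar u_i=t\}}|\nabla\bar u_i|\,d\mathcal H^{N-1}=|\Omega|^2/F_i'(t)$. Integrating the pointwise inequality over $t\in(a_i,b_i)$ and applying the coarea formula in reverse on both sides gives $\int_{\Omega\times[0,1]}|\nabla\bar u_i|^2\,dx\le\int_{\Omega\times[0,1]}|\nabla u_i|^2\,dx$. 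Finally, equality forces equality in the projection bound for a.e.\ $t$, i.e.\ $\phi_t$ constant for a.e.\ $t$, which means every level set of $u_i$ is a slice $\Omega\times\{c\}$ --- equivalently $u_i$ depends on $x_N$ alone, so $u_i=\bar u_i$. This is precisely the equality statement used in the proof of Theorem \ref{bounded_domains}.

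I expect the only real difficulty to be technical: making the coarea bookkeeping rigorous at the appropriate regularity (so that for a.e.\ $t$ the level set is a Lipschitz graph to which the area formula applies, and $F_i$ is absolutely continuous with the stated derivative), and checking a priori that $\bar u_i\in H^1$. Treating Lipschitz $u_i$ first disposes of all of this at once; the reduction of a general admissible $u_i$ to the Lipschitz case is then a routine approximation, exploiting that the monotone rearrangement $u_i\mapsto\bar u_i$ is compatible with monotone Lipschitz approximation of $u_i$. Beyond this, no idea other than the coarea argument is needed.
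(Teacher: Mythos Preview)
Your proof is correct and follows essentially the same route as the paper's: coarea formula to reduce to level sets, Cauchy--Schwarz on each level set (the paper phrases this as Jensen's inequality for $s\mapsto s^{-1}$, which is the same thing), and the projection/graph argument to compare $\mathcal H^{N-1}(\{u_i=t\})$ with $|\Omega|=\mathcal H^{N-1}(\{\bar u_i=t\})$. Your treatment is in fact a bit more careful than the paper's about regularity and about identifying the $\bar u_i$ contribution explicitly as $|\Omega|^2/F_i'(t)$, but the underlying argument is identical.
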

The proof is adapted from the proof of Theorem 4.7 (the Polya-Sz\"ego inequality) in the lecture notes of Burchard \cite{bur}.
\begin{proof}
By the co-area formula,

\begin{equation*}
\int_{\Omega \times[0,1]}|\nabla u_i|^2 dx =\int_a^b\int_{u_i^{-1}(t)}\int|\nabla u_i|dH^{N-1}dt,
\end{equation*}
where $H^{N-1}$ denotes $N-1$ dimensional Hausdorff measure.  Now, $s \mapsto s^{-1}$ is convex, and so, by Jensen's inequality, we have
\begin{equation*}
\Big(\int_{u_i^{-1}(t)} |\nabla u_i|^{-1}\frac{dH^{N-1}}{H^{N-1}(u_i^{-1}(t))}\Big)^{-1} \leq \int_{u_i^{-1}(t)} |\nabla u_i|\frac{dH^{N-1}}{H^{N-1}(u_i^{-1}(t))}.
\end{equation*}
Now, as the gradients of both $u_i$ and $\bar u_i$ never vanish, and the functions are equimeasurable, we have as an easy consequence of the co-area formula

\begin{equation*}
\int_{u_i^{-1}(t)} |\nabla u_i|^{-1}dH^{N-1} =\int_{u_i^{-1}(t)} |\nabla \bar u_i|^{-1}dH^{N-1}.
\end{equation*}
Finally, note that for $t \in [a,b]$, $H^{N-1}(\{\bar u_i =t\}) = H^{N-1}(\Omega)$.  On the other hand, for every point $x'$ in $\Omega$, we have $u_i(x',0) =a$ and $u_i(x',1) =b$; it follows that there is some $x_N$ so that $u_i(x',x_N) =t$.  Therefore, the projection $(x',x_N) \mapsto x'$ from the set $\{ u_i =t\}$ to $\Omega$ is surjective.  As this projection clearly has Lipschitz norm $1$, it follows that $H^{N-1}(\{ u =t\}) \geq H^{N-1}(\Omega)$.  We therefore have
\begin{equation*}
H^{N-1}(\{\bar u_i =t\}) \leq H^{N-1}(\{ u_i =t\}).
\end{equation*}
It now follows that 
\begin{eqnarray*}
\int_{u_i^{-1}(t)} |\nabla u_i|dH^{N-1}& \geq & \big(H^{N-1}(u_i^{-1}(t))\big)^2\Big(\int_{u_i^{-1}(t)} |\nabla u_i|^{-1}dH^{N-1}\Big)^{-1}  \\
& \geq &  \Big(H^{N-1}({\bar u_i}^{-1}(t))\big)^2\Big( \int_{{\bar u_i}^{-1}(t)} |\nabla {\bar u_i}|^{-1}dH^{N-1} \Big)^{-1}.
\end{eqnarray*}
Now, as $\nabla \bar u_i$ is constant on the level sets of $\bar u_i$, we get equality in Jensen's inequality when $u_i = \bar u_i$; therefore, we get equality in the above sequence of inequalities when $u_i =\bar u_i$, completing the proof.
\end{proof}

\section{Examples of decoupled systems}
\subsection{Allen-Cahn potentials with quadratic interaction}
Consider the nonlinearity $H(u) = \sum_{i \neq j}|u_i -u_j|^2 +\sum_{i=1}^mW(u_i)$, where $W(u_i)=\frac{1}{4}(u_i^2-1)^2$ is the Allen-Cahn potential. Since $\frac{\partial ^2 H}{\partial u_i \partial u_j} = -2 <0$ for all $i \neq j$, $H$ is orientable.  It is clear that 
\begin{equation}\label{decoupcond}
H(u) \geq \sum_{i=1}^mW(u_i),
\end{equation}
with equality when $u_i =u_j$ for all $i \neq j$.  Therefore, taking $V_i(u_i) = W(u_i)$, we obtain the decoupled system
\begin{equation*}
\Delta u_i = V_i'(u_i) =W'(u_i) = u_i(u_i^2-1).
\end{equation*}
It is well known that the one dimensional solutions to this equation are of the form $u_i(x) = \tanh(\frac{a\cdot x -b}{\sqrt{2}})$, for constants $b \in \mathbb{R}$, $a \in \mathbb{R}^n$, with $|a| =1$; note that these functions, with a common $a,b$  solve the decoupled system {\it and} also satisfy the constraint $u_i=u_j$, so that we have equality in \eqref{decoupcond}.  We therefore obtain a one dimensional solution, with common level sets, to the original coupled system

\begin{equation*}
\Delta u_i = H'(u_i) = 2(m-1)u_i - 2\sum_{j \neq i}^mu_j + u_i(u_i^2-1).
\end{equation*}
(one could also easily verify directly that these functions solve  the original system).
We do not know whether there are other $H$-monotone solutions to this equation. If so, the result of Fazly-Ghoussoub \cite{FG} then implies that, at least in low dimensions, their components are one dimensional and have common level sets. 
\subsection{Products of Allen-Cahn potentials}

Consider the nonlinearity $H(u_1,u_2,...u_m) = m\log[\frac{1}{m}\sum_{i=1}^me^{\frac{1}{4}(u_i^2-1)^2}]$.  Assuming $|u_i| \leq 1$, applying the  arithmetic-geometric mean inequality to the quantities $e^{\frac{1}{4}(u_i^2-1)^2}$ and taking logarithms of both sides implies
\begin{equation}\label{decoupcond2}
\sum_{i=1}^m\frac{1}{4}(u_i^2-1)^2\leq H(u_1,u_2,...u_m),  
\end{equation}
with equality when $u_i^2 =u_j^2$ for all $i,j$.  As in the previous example, the decoupled system becomes
\begin{equation}
\Delta u_i = V_i'(u_i) =W'(u_i) = u_i(u_i^2-1).
\end{equation}
Note that, for this nonlinearity the mixed second order partial derivatives change signs, as 
\begin{equation*}
\frac{\partial ^2 H}{\partial u_i \partial u_j} = \frac{-mu_iu_j (u_i^2-1)(u_j^2-1)e^{\frac{1}{4}(u_i^2-1)^2}e^{\frac{1}{4}(u_j^2-1)^2}}{[\sum_{k=1}^me^{\frac{1}{4}(u_k^2-1)^2}]^2}.
\end{equation*}
However, it is easy to see that $H$ is orientable in any region where $u_i \neq 0$ and $|u_i| \leq 1$.  Here, we can take as solutions to the decoupled problem 
\begin{equation*}
u_i(x) = \pm \tanh(\frac{a\cdot x -b}{\sqrt{2}})
\end{equation*}
as with either sign, the functions solve the decoupled system and satisfy the equality constraint in \eqref{decoupcond2}.  It is also straightforward to check directly that these functions solve the original system.
\subsection{Coupled quadratic system}
In \cite{blwz}, Berestycki, Lin, Wei and Zhao  studied the case where $H(u_1,u_2) = \frac{1}{2}u_1^2u_2^2$, corresponding to the system 
\begin{eqnarray}\label{blwz}
\Delta u_1 &=&u_1u_2^2\nonumber\\
\Delta u_2 &=&u_1^2u_2,
\end{eqnarray}
along with the extra constraint, $u_1, u_2 \geq 0$.   Here, $H$-conjugate Kantorovich potentials amount to increasing concave functions with increasing Legendre duals; that is,

\begin{equation*}
V_1(p_1) = \inf_{p_2} \frac{1}{2}p_1^2p_2^2 - V_2(p_2)
\end{equation*}
for some $V_2:[0, \infty) \rightarrow \mathbb{R}$ if and only if $q_1 \mapsto F_1(q_1):=2V_1(\sqrt{q_1}) =\inf_{q_2>0} q_1q_2 - 2V_2(\sqrt{q_2})$ is concave and $V_1'(q_1) \geq 0$ (as $V_1'(q_1) =q_2$ for a minimizing $q_2$).

Finding solutions with common level sets to the system (\ref{blwz}) can then be reformulated as looking for a concave function $F_1$, with conjugate $F_2$, such that both $F_1$ and $F_2$ are increasing on $[0,\infty)$, as well as non-negative functions $u_1,u_2$, saturating everywhere the inequality $F_1(u_1^2) +F_2(u_2^2) \leq u_1^2u_2^2$, and solving the decoupled system
\begin{eqnarray*}
\Delta u_1 =2F_1(u_1^2) \\
\Delta u_2 =2F_2(u_2^2).
\end{eqnarray*}
Any such solution $(u_1, u_2)$ is automatically an $H$-monotone solution to the original system, with common level sets.  In low dimensions, DeGiorgi type results for scalar equations imply that such solutions are one dimensional.  Below, we exhibit an explicit example of such a solution.

In one dimension, Berestycki et al. found a solution with 
\begin{eqnarray*}
u_1(-\infty) =0,& & u_1(\infty) =\infty,\\
 u_2(-\infty) =\infty,& & u_2(-\infty) =0.
\end{eqnarray*}
We can use this solution to build decoupling potentials as follows. For all $p_1 \geq 0$, set $V_1(0)=0$ and $V_1'(p_1) = p_1u_2^2(x)$ for the unique $x$ such that $p_1 = u_1(x)$ and define $V_2(p_2)$ analogously.  In other words,
\[
\hbox{$V_1(t)=\int_0^tsu_2^2(u_1^{-1}(s))\, ds$ and $V_2(t)=\int_0^tsu_1^2(u_2^{-1}(s))\, ds.$}
\]
 The functions $F_i(q_i) =2V_i(\sqrt{q_i})$ are then concave conjugates, while $u_1$ and $u_2$ solve the decoupled system
\begin{equation*}
\Delta u_i = V_i'(u_i) = u_iF'(u_i^2).
\end{equation*}
In light of the preceding results, we suspect that in low dimensions, any $H$-monotone solution with appropriate limits must satisfy this decoupled system. 

Finally, let us emphasise that this gives a new method for finding solutions to this system.  One looks first for an increasing concave function $F_1$, and a solution $u_1$ to the equation $\Delta u_1 =2F_1(u_1^2)$.  This will yield a solution to the system, provided   $u_2$ solves $\Delta u_2 =2F_2(u_2^2)$, where $F_2$ is the concave conjugate of $F_1$ and for $u_2 = \sqrt { F_1'(u_1^2)}$ (this last condition ensures that $F_1(u_1^2) +F_2(u_2^2) =u_1^2u_2^2$).

\bibliographystyle{plain}

\end{document}